\newtheorem{thm}{Theorem}[section]
\newtheorem{prop}[thm]{Proposition}
\newtheorem{lem}[thm]{Lemma}
\theoremstyle{definition}
\newcommand{\C}{\mathbb{C}}
\newcommand{\Z}{\mathbb{Z}}
\newcommand{\Q}{\mathbb{Q}}
\newcommand{\F}{\mathbb{F}}
\newcommand{\M}{\mathrm{M}}
\newcommand{\bs}{\boldsymbol}
\newcommand{\s}{\sigma}
\newcommand{\chara}{\operatorname{char}}
\begin{document}
\title{Modular Terwilliger algebras of association schemes}
\author{Akihide Hanaki}
\address{Department of Mathematics, Faculty of Science, Shinshu University, Matsumoto 390-8621, Japan}
\email{hanaki@shinshu-u.ac.jp}

\keywords{association scheme; Terwilliger algebra}
\subjclass[2010]{05E30} 
\thanks{This work was supported by JSPS KAKENHI Grant Number JP17K05165.}


\maketitle

\begin{abstract}
  We define modular Terwilliger algebras of association schemes,
  Terwilliger algebras over a positive characteristic field,
  and consider basic properties.
  We give a condition for the modular Terwilliger algebra to be non-semisimple.
  We show that the dimension of a Terwilliger algebra of a Johnson scheme 
  depends on the characteristic of the coefficient field.
  We also give some other examples.
\end{abstract}

\section{Introduction}
In a series of papers \cite{Terwilliger1992,Terwilliger1993a,Terwilliger1993b},
P.~Terwilliger defined and studied subconstituent algebras of commutative association schemes
for the theory of distance regular graphs.
Now the algebras are called Terwilliger algebras.
Terwilliger algebras are finite-dimensional semisimple algebras over the complex number field.
Since the Terwilliger algebra is defined for an association scheme and a fixed point,
we can expect that it has more combinatorial information than the adjacency algebra.
The Terwilliger algebra is defined as a matrix algebra generated by some matrices
all whose entries are in $\{0,1\}$.
Thus, for a commutative ring $R$ with the identity $1$,
we can define an $R$-algebra generated by these matrices.
We call it the Terwilliger algebra over $R$.
Especially, we call it the modular Terwilliger algebra
when $R$ is a field of positive characteristic.

Let $(X,S)$ be an association scheme, $x\in X$, and $K$ be a field.
We denote by $KT(x)$ the Terwilliger algebra of $(X,S)$ at the point $x$ over $K$.
If $K$ is of positive characteristic, $KT(x)$ is not necessary semisimple.
A natural question is when it is semisimple.
It is known that the dimension of $\C T(x)$ is depending on the choice of the point $x$
\cite{Tomiyama-Yamazaki1994}.
By example, we can see that the semisimplicity of $KT(x)$ also depends on the choice of the point $x$.
Thus the question seems to be difficult, in general.
We will give a sufficient condition for $KT(x)$ to be non-semisimple in Theorem \ref{ssthm},
$KT(x)$ is not semisimple if the characteristic of $K$ divides a valency of some element of $S$.

In Section \ref{sec:Johnson}, we will consider Johnson schemes $J(n,2)$.
Let $K$ be a field of characteristic $p$.
We will see that the dimension of the Terwilliger algebra is $16$ if $p=0$ or $p\nmid n-4$
and $15$ if $p\mid n-4$.
These examples give a negative answer to Terwilliger's small question
in \cite[Conjecture 10]{Terwilliger1993b}.
In Section \ref{sec:ex}, we will give some other examples.

\section{Preliminaries and definitions}
Let $X$ be a finite set, and let $R$ be a commutative ring with the identity $1$.
We denote by $\M_X(R)$ the full matrix ring over $R$, rows and columns of whose matrices are
indexed by the set $X$.
For $\sin \M_X(R)$, we denote the transposed matrix of $\s$ by $\s^T$.

For $s\subset X\times X$, the \emph{adjacency matrix} $\s_s\in M_X(\Z)$ is defined by
$(\s_s)_{xy}=1$ if $(x,y)\in s$ and $0$ otherwise.
We often regard $\s_s$ is in $\M_X(R)$ for a suitable $R$.

For $s\subset X\times X$, we set $s^*:=\{(y,x)\mid (x,y)\in s\}$.
Clearly we have $\s_{s^*}=\s_s^T$.
For $s\subset X\times X$ and $x\in X$, we set $xs:=\{y\in X\mid (x,y)\in s\}$ and
$sx:=\{y\in X\mid (y,x)\in s\}$.

\subsection{Association schemes}
Let $X$ be a finite set, and let $X\times X=\bigcup_{s\in S}s$ be a partition of $X\times X$.
We call the pair $(X,S)$ an  \emph{association scheme} if
\begin{enumerate}
  \item $1:=\{(x,x)\mid x\in X\}\in S$,
  \item $s^*:=\{(y,x)\mid (x,y)\in s\}\in S$ if $s\in S$,
  \item for $s,t,u\in S$, there is a non-negative integer $p_{st}^u$ such that
  $p_{st}^u=|xs\cap ty|$ when $(x,y)\in u$.
\end{enumerate}
The condition (3) means that $\s_s\s_t=\sum_{u\in S}p_{st}^u\s_u$ by the usual matrix multiplication.
For $s\in S$, $n_s:=p_{ss^*}^1=|xs|$ is independent of the choice of $x\in X$,
and we call this number the \emph{valency} of $s$.

We say an association scheme $(X,S)$ is \emph{commutative} if $p_{st}^u=p_{ts}^u$ for all $s,t,u\in S$,
\emph{symmetric} if $s^*=s$ for all $s\in S$.
Symmetric association schemes are commutative.

By the condition (3), $RS:=\bigoplus_{s\in S}R\s_s$ is an $R$-algebra.
We call $RS$ the \emph{adjacency algebra} of $(X,S)$ over $R$.

It is well known that strongly regular graphs correspond to symmetric association schemes with $|S|=3$.
We often identify them.

\subsection{Terwilliger algebras}
Let $(X,S)$ be an association scheme.
Fix $x\in X$.
We have a partition $X=\bigcup_{s\in S}xs$.
We set the diagonal matrix $E^*_s\in \M_X(\Z)$ whose
$(y,y)$-entry is $1$ if $y\in xs$ and $0$ otherwise.

Let $R$ be  a commutative ring with $1$.
We regard $\s_s$ and $E_s^*$ are elements in $\M_X(R)$ and set $RT(x)$ the $R$-algebra generated by
$\{E_s^*\s_tE_u^* \mid s, t, u \in S \}$.
We call $RT(x)$ the \emph{Terwilliger algebra} of $(X,S)$ over $R$ at $x$.
The original definition of a Terwilliger algebra in \cite{Terwilliger1992} is $\C T(x)$.
When $R$ is a field of positive characteristic 
we call $RT(x)$ a \emph{modular Terwilliger algebra}.
Remark that $E_s^*(RT(x)) E_s^*$ is a subalgebra of $RT(x)$ with the
identity element $E_s^*$.

If $K$ and $K'$ have the same characteristic, then the dimensions of the Terwilliger algebras over them are equal.
Semisimplicity is also depending only on the characteristic of the coefficient field, 
because the algebra is defined over the prime field and the prime field is perfect
(see \cite[Chap.~II, Sect.~5]{NT}, for example).

Let $p$ be a prime number, and let $\F_p$ be a field of order $p$.
By our definition, $\F_p T(x)$ is isomorphic to $\Z T(x)/p(\Q T(x)\cap \M_X(\Z))$.
This is different from $\Z T(x)/p\Z T(x)\cong \F_p\otimes_\Z \Z T(x)$, in general.
We have $\dim_{\F_p}\F_p\otimes_\Z \Z T(x)=\mathrm{rank}_\Z \Z T(x)=\dim_\C \C T(x)$,
but we have many examples such that $\dim_\C \C T(x)\ne \dim_{\F_p}\F_p T(x)$
(see Section \ref{sec:Johnson} and Section \ref{sec:ex}).
It is easy to see that $\Z T(x)$ is a $\Z$-submodule 
of $\Q T(x)\cap M_X(\Z)$ of full rank.
Let $e_1,\dots,e_r$ be the elementary divisors ($r=\dim_\C \C T(x)$).
Then 
\begin{eqnarray*}
  \dim_{\F_p}\F_p T(x) &=& |\{i\mid \text{$p$ is prime to $e_i$}\}|\\
                       &=& \dim_\C \C T(x) -|\{i\mid \text{$p$ divides $e_i$}\}|.
\end{eqnarray*}
Especially,
$\dim_\C \C T(x)= \dim_{\F_p}\F_p T(x)$ if and only if
$p$ does not divide the index $|\Q T(x)\cap M_X(\Z): \Z T(x)|$.

Now, the following proposition holds.

\begin{prop}
  Let $(X,S)$ be an association scheme. We fix $x\in X$.
  Then $\Q T(x)\cap \M_X(\Z)= \Z T(x)$ if and only if
  $\dim_\C \C T(x)= \dim_{K}K T(x)$ for any field $K$.
\end{prop}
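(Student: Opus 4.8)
The plan is to reduce the statement to finite prime fields and then apply the elementary-divisor analysis carried out just before the statement. As recalled there, $\dim_K KT(x)$ depends only on $\chara K$: when $\chara K=0$ we have $\C T(x)=\C\otimes_\Q \Q T(x)$, so $\dim_K KT(x)=\dim_\Q\Q T(x)=\dim_\C\C T(x)$; when $\chara K=p>0$ we have $\dim_K KT(x)=\dim_{\F_p}\F_p T(x)$. Hence the condition ``$\dim_\C\C T(x)=\dim_K KT(x)$ for every field $K$'' is equivalent to the condition ``$\dim_\C\C T(x)=\dim_{\F_p}\F_p T(x)$ for every prime $p$''.

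Next I would set $N:=|\Q T(x)\cap\M_X(\Z):\Z T(x)|$. Since $\Z T(x)$ is a full-rank $\Z$-submodule of the finite-rank free $\Z$-module $\Q T(x)\cap\M_X(\Z)$, the index $N$ is a finite positive integer, namely the product $e_1\cdots e_r$ of the elementary divisors. By the observation preceding the statement, $\dim_\C\C T(x)=\dim_{\F_p}\F_p T(x)$ holds if and only if $p\nmid N$. Therefore $\dim_\C\C T(x)=\dim_{\F_p}\F_p T(x)$ for every prime $p$ if and only if no prime divides $N$, which, as $N\geq 1$, is equivalent to $N=1$, i.e.\ to $\Q T(x)\cap\M_X(\Z)=\Z T(x)$. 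Combining this with the reduction of the first paragraph yields both implications at once.

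I do not expect a genuine obstacle here: the proposition is essentially a repackaging of the elementary-divisor discussion that precedes it. The only points that require a little care are the finiteness of the index $N$ (which rests on $\Z T(x)$ having full rank in $\Q T(x)\cap\M_X(\Z)$, already noted above) and the passage from an arbitrary field to its prime field (which uses that the dimension of $KT(x)$ is invariant under extension of the base field, together with $\dim_\Q\Q T(x)=\dim_\C\C T(x)$ in characteristic $0$).
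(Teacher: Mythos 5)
Your argument is correct and is essentially the paper's own: the proposition is stated there as an immediate consequence of the preceding elementary-divisor discussion (the index $|\Q T(x)\cap \M_X(\Z):\Z T(x)|$ equals $e_1\cdots e_r$, and $\dim_\C\C T(x)=\dim_{\F_p}\F_p T(x)$ iff $p$ does not divide it), combined with the earlier remark that $\dim_K KT(x)$ depends only on $\chara K$. Your reduction to prime fields and the observation that $N=1$ iff no prime divides $N$ match that reasoning exactly.
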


\section{Semisimplicity}
The Terwilliger algebra over the complex number field is semisimple
since it is closed by transposition and complex conjugate.
The Terwilliger algebra over a positive characteristic field is not necessary semisimple.
Moreover it depends on the choice of the point $x\in X$.
For example, $(26,10,3,4)$-strongly regular graphs (see Subsection \ref{subsec26})
give such examples in characteristic $7$ and $11$.

\begin{prop}\label{ss1}
  Let $(X,S)$ be an association scheme and fix $x\in X$.
  Let $K$ be a field.
  If the Terwilliger algebra $KT(x)$ of $(X,S)$ over $K$ is semisimple, then
  $E_s^*(KT(x))E_s^*$ is also semisimple for every $s\in S$.
\end{prop}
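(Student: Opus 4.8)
The plan is to recognize $E_s^*(KT(x))E_s^*$ as a corner algebra $eAe$ of $A:=KT(x)$ and then to invoke the standard fact that such a corner inherits semisimplicity from $A$.

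First I would check that $E_s^*$ is genuinely an idempotent element of $KT(x)$. Since $\s_1$ is the identity matrix of $\M_X(K)$, we have $E_s^*\s_1 E_s^*=E_s^*$, so $E_s^*$ is one of the generators $E_s^*\s_tE_u^*$ of $KT(x)$; and $(E_s^*)^2=E_s^*$ because $E_s^*$ is a diagonal $0$--$1$ matrix. Moreover, since $X=\bigcup_{t\in S}xt$ is a partition, $\sum_{t\in S}E_t^*=\s_1$, so $KT(x)$ is a unital $K$-algebra with identity $\s_1$, and (as already remarked) $E_s^*(KT(x))E_s^*$ is precisely the corner algebra $eAe$ for $e:=E_s^*$, a subalgebra with identity $e$.

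Next I would use the classical description of the Jacobson radical of a corner ring: for any idempotent $e$ of a ring $A$ one has $J(eAe)=e\,J(A)\,e$, where $J(\cdot)$ denotes the Jacobson radical. As $A=KT(x)$ is finite-dimensional over $K$, hence Artinian, semisimplicity of $A$ is equivalent to $J(A)=0$; then $J(eAe)=e\cdot 0\cdot e=0$, and since $eAe=E_s^*(KT(x))E_s^*$ is again finite-dimensional, $J(eAe)=0$ forces it to be semisimple. Taking $e=E_s^*$ for each $s\in S$ yields the proposition. (Alternatively one can argue module-theoretically: $Ae$ is a direct summand of the regular module ${}_AA$, hence a semisimple $A$-module of finite length, so $eAe\cong\End_A(Ae)^{\mathrm{op}}$ is a finite product of matrix rings over division rings by Schur's lemma, and in particular semisimple.)

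There is essentially no serious obstacle here: the substance of the argument is the general structure theory of finite-dimensional algebras. The only points that require a word of care are verifying that $E_s^*$ really lies in $KT(x)$ and is idempotent—so that $E_s^*(KT(x))E_s^*$ is literally a corner algebra—and citing the identity $J(eAe)=eJ(A)e$; the rest is routine.
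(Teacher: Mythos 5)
Your proof is correct, but it takes a genuinely different route from the paper. You package everything into the general theory of corner algebras: after checking that $e:=E_s^*=E_s^*\s_1E_s^*$ lies in $A:=KT(x)$ and is idempotent (here $1\in S$ is the diagonal relation of Section 2, so $\s_1$ is the identity matrix; beware that in the Johnson-scheme section the symbol $\s_1$ means something else) and that $\sum_{t\in S}E_t^*$ is the identity of $A$, you invoke the corner-radical formula $J(eAe)=eJ(A)e$, or equivalently the observation that $Ae$ is a direct summand of ${}_AA$ and $eAe\cong\End_A(Ae)^{\mathrm{op}}$, so $J(A)=0$ forces $J(eAe)=0$ and the finite-dimensional algebra $eAe=E_s^*(KT(x))E_s^*$ is semisimple. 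The paper instead argues contrapositively and self-containedly: if $\mathcal{J}:=J(E_s^*(KT(x))E_s^*)\neq 0$ with $\mathcal{J}^\ell=0$, then the two-sided ideal $(KT(x))\mathcal{J}(KT(x))$ is nilpotent, because a product $(a_1j_1b_1)(a_2j_2b_2)\cdots(a_\ell j_\ell b_\ell)$ telescopes as $a_1E_s^*j_1(E_s^*b_1a_2E_s^*)j_2\cdots(E_s^*b_{\ell-1}a_\ell E_s^*)j_\ell E_s^*b_\ell$, which lies in $(KT(x))\mathcal{J}^\ell(KT(x))=0$ since $\mathcal{J}$ is an ideal of the corner; a nonzero nilpotent ideal then rules out semisimplicity of $KT(x)$. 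In substance the paper proves by hand, in this special case, exactly the direction of the corner-algebra fact you cite: your version is shorter and makes the structural reason transparent, while the paper's needs only the nilpotency of the radical of a finite-dimensional algebra and no external theorem. Your explicit check that $E_s^*\in KT(x)$ is a worthwhile point that the paper leaves implicit (it is needed there too, to know $\mathcal{J}\subseteq KT(x)$ and that the ideal it generates is nonzero).
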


\begin{proof}
  Suppose $E_s^*(KT(x))E_s^*$ is not semisimple for some $s\in S$.
  The Jacobson radical $\mathcal{J}$ of $E_s^*(KT(x))E_s^*$ is non-zero.
  Since  $\mathcal{J}$ is nilpotent, assume $\mathcal{J}^\ell=0$.
  It is enough to show that $((KT(x))\mathcal{J}(KT(x)))^\ell=0$.
  For $a_i, b_i\in KT(x)$ and $j_i\in\mathcal{J}$, we have
  \begin{eqnarray*}
    & &(a_1 j_1 b_1)(a_2 j_2 b_2)\dots (a_\ell j_\ell b_\ell)\\
    &=&a_1 E_s^* j_1 (E_s^* b_1a_2 E_s^*) j_2 (E_s^* b_2 a_3 E_s^*) \dots (E_s^* b_{\ell-1} a_\ell E_s^*) j_\ell E_s^*b_\ell\\
    &\in& (KT(x))\mathcal{J}^\ell (K(T(x))=0
  \end{eqnarray*}
  and thus $((KT(x))\mathcal{J}(KT(x)))^\ell=0$. 
\end{proof}

The converse of Proposition \ref{ss1} is not true, in general.
We will give an example of order $15$ in Subsection \ref{ex15.5}.
We could find similar examples also of order $19$, $23$, $27$ and $30$.
The examples are non-symmetric.
The author does not know the converse is true or not for symmetric association schemes.

\renewcommand{\kbldelim}{(}
\renewcommand{\kbrdelim}{)}
For $s,t,u\in S$, write
$$\s_u=\kbordermatrix{
     &  &        & xt      &        & \\
     &  & \vrule &         & \vrule & \\
  \cline{2-6}
  xs &  & \vrule & \s^{st}_u & \vrule & \\
  \cline{2-6}
     &  & \vrule &         & \vrule & 
}.$$

\begin{lem}\label{ss2}
  For $s,t,u\in S$, $\s_{st}^u$ is an incidence matrix of a tactical configuration.
  Every row of $\s_{st}^u$ contains $p_{tu^*}^s$ ones and
  every column of $\s_{st}^u$ contains $p_{su}^t$ ones.
\end{lem}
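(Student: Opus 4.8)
The plan is to read the two asserted counts off directly from condition (3) in the definition of an association scheme, once the combinatorial meaning of $\sigma_{st}^u$ has been made explicit. By construction the rows of $\sigma_{st}^u$ are indexed by the points $y\in xs$, its columns by the points $z\in xt$, and its $(y,z)$-entry equals $1$ precisely when $(y,z)\in u$. So a ``block'' is a row $y\in xs$, a ``point'' is a column $z\in xt$, and I must show that the two kinds of line sums are constant.

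First I would fix a row, that is, a point $y\in xs$, so that $(x,y)\in s$. The number of ones in this row is
\[
  |\{z\in X : (x,z)\in t \text{ and } (y,z)\in u\}| = |xt\cap u^*y|,
\]
using that $(y,z)\in u$ is equivalent to $z\in u^*y$. Since $(x,y)\in s$, condition (3) applied to the triple $(t,u^*,s)$ shows that this cardinality equals $p_{tu^*}^s$, independently of the chosen row $y$. Dually, fixing a column, that is, a point $z\in xt$ with $(x,z)\in t$, the number of ones in this column is
\[
  |\{y\in X : (x,y)\in s \text{ and } (y,z)\in u\}| = |xs\cap uz| = p_{su}^t
\]
by condition (3) applied to the triple $(s,u,t)$, independently of $z$. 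Hence every row of $\sigma_{st}^u$ has exactly $p_{tu^*}^s$ ones and every column has exactly $p_{su}^t$ ones, which is precisely the statement that $\sigma_{st}^u$ is the incidence matrix of a tactical configuration; counting the total number of ones in the two ways also recovers the identity $n_s p_{tu^*}^s = n_t p_{su}^t$.

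I do not expect a genuine obstacle here: the argument is a direct translation of the axioms, and the only point requiring care is bookkeeping with the involution $*$ and with the left/right conventions $xs$ versus $sx$, so as not to inadvertently interchange $u$ and $u^*$. If one's convention requires a tactical configuration to be non-degenerate, one should additionally dismiss the trivial case in which $\sigma_{st}^u$ is the zero matrix (equivalently $p_{tu^*}^s=p_{su}^t=0$).
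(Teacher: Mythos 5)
Your proof is correct and is essentially the same as the paper's: fix a row $y\in xs$ (resp.\ a column $z\in xt$), rewrite the count of ones as an intersection number via condition (3), obtaining $p_{tu^*}^s$ and $p_{su}^t$ respectively. The extra remarks (the identity $n_s p_{tu^*}^s=n_t p_{su}^t$ and the degenerate zero case) are harmless additions.
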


\begin{proof}
  For $y\in xs$, equivalent to $(x,y)\in s$, the $y$-th row of $\s_{st}^u$ contains
  $$\sharp\{z\in xt\mid (y,z)\in u\}
  =\sharp\{z\in X\mid (x,z)\in t,\ (z,y)\in u^*\}=p_{tu^*}^s$$
  ones.
  Similarly, for $z\in xt$, the $z$-th column of $\s_{st}^u$ contains
  $$\sharp\{y\in xs\mid (y,z)\in u\}
  =\sharp\{y\in X\mid (x,y)\in s,\ (y,z)\in u\}=p_{su}^t$$
  ones.
\end{proof}

\begin{lem}\label{ss3}
  For $s\in S$, $KE_s^* J E_s^*$ is a one-dimensional two-sided ideal of $E_s^* KT(x) E_s^*$,
  where $J$ is the square matrix  all whose entries are one.
\end{lem}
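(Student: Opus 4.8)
The plan is to show three things: that $J' := E_s^* J E_s^*$ lies in $E_s^* KT(x) E_s^*$, that $K J'$ is closed under left and right multiplication by every element of $E_s^* KT(x) E_s^*$, and that $J' \neq 0$ so the ideal is genuinely one-dimensional. First I would observe that $J' = E_s^* J E_s^*$ is simply the matrix with a $1$ in every $(y,z)$ entry for $y,z \in xs$ and $0$ elsewhere, so it is nonzero (as $xs \neq \emptyset$, since $x \in xs$ when... more precisely $xs$ is nonempty because $n_s \geq 1$) and spans a one-dimensional subspace. To see $J' \in E_s^* KT(x) E_s^*$, I would write $J = \sum_{t \in S} \s_t$ (the all-ones matrix is the sum of all adjacency matrices, since the $s_t$ partition $X \times X$), whence $J' = \sum_{t \in S} E_s^* \s_t E_s^*$, and each summand $E_s^* \s_t E_s^*$ is one of the generators of $KT(x)$; as each also equals $E_s^*(E_s^* \s_t E_s^*)E_s^*$, the sum lies in $E_s^* KT(x) E_s^*$.

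For the ideal property, the key computation is that $\s_u J' = c \, J'$ and $J' \s_u = c' \, J'$ for suitable scalars $c, c'$, after cutting down by $E_s^*$ on the appropriate sides. Concretely, for a generator $g = E_s^* \s_t E_s^*$ of $E_s^* KT(x) E_s^*$, I would compute $g \cdot J'$ and $J' \cdot g$ entrywise. Using Lemma \ref{ss2} with $u$ there playing a suitable role: the block $\s^{ss}_t$ of $\s_t$ cut down to rows and columns in $xs$ has every row sum equal to $p_{tt^*}^s$ and every column sum equal to $p_{st}^s$. Hence $(E_s^* \s_t E_s^*) \cdot E_s^* J E_s^* = p_{tt^*}^s \cdot E_s^* J E_s^* = p_{tt^*}^s J'$, since multiplying a $0/1$ matrix with constant row sum $r$ (supported on rows/columns indexed by $xs$) by the all-ones matrix on $xs$ gives $r$ times the all-ones matrix; similarly $J' \cdot (E_s^* \s_t E_s^*) = p_{st}^s J'$. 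Since products of generators again land in $K J'$ by iterating, and general elements are $K$-linear combinations of such products, $K J'$ absorbs multiplication on both sides by all of $E_s^* KT(x) E_s^*$, i.e. it is a two-sided ideal.

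The main obstacle — really the only place one must be careful — is making sure the row/column sum bookkeeping in Lemma \ref{ss2} is applied with the correct indices, since there $\s^{st}_u$ refers to a block with rows in $xs$ and columns in $xt$, whereas here I need the block with both rows and columns in $xs$, i.e. the case $t = s$ of that lemma applied to the adjacency matrix $\s_t$ (so in the notation of Lemma \ref{ss2} one takes "$s$"$=$"$t$"$=s$ and "$u$"$=t$, giving row sum $p_{t t^*}^s$ and column sum $p_{s t}^s$). Once that is pinned down, everything is a one-line matrix identity. It is worth remarking that this shows $E_s^* KT(x) E_s^*$ always has the all-ones block as a distinguished ideal regardless of the characteristic of $K$, which is exactly the structure that can be exploited to detect non-semisimplicity in Theorem \ref{ssthm}.
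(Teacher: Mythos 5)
Your membership argument is fine: $E_s^*JE_s^*=\sum_{t\in S}E_s^*\s_tE_s^*$ is a sum of generators, it is nonzero since $|xs|=n_s\geq 1$, and the absorption computation for a single element $E_s^*\s_tE_s^*$ is correct in spirit (one small index slip: in the notation of Lemma \ref{ss2} the block of $\s_t$ with rows and columns in $xs$ is $\s^{ss}_t$, whose constant row sum is $p_{st^*}^s$, not $p_{tt^*}^s$; this is harmless because only the constancy of the row and column sums matters).

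The genuine gap is in the last step, where you check absorption only against the elements $E_s^*\s_tE_s^*$ and their products and then assert that ``general elements are $K$-linear combinations of such products.'' That amounts to assuming that $E_s^*KT(x)E_s^*$ is generated as an algebra by $\{E_s^*\s_tE_s^*\mid t\in S\}$, which is not justified and is false in general: $E_s^*KT(x)E_s^*$ is spanned by products of the form $(E_s^*\s_{j_1}E_{k_1}^*)(E_{k_1}^*\s_{j_2}E_{k_2}^*)\cdots(E_{k_{m-1}}^*\s_{j_m}E_s^*)$, whose intermediate indices $k_\ell$ need not equal $s$; such ``detours'' through other subconstituents are exactly what makes the corner algebra larger than the algebra generated by the square blocks (compare the element $M$ in Section \ref{sec:Johnson}). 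The repair uses Lemma \ref{ss2} in its full rectangular generality: for all $i,j,k$ one has $(E_s^*JE_i^*)(E_i^*\s_jE_k^*)=p_{ij}^{k}\,E_s^*JE_k^*$ and $(E_i^*\s_jE_k^*)(E_k^*JE_s^*)=p_{kj^*}^{i}\,E_i^*JE_s^*$, because the block of $\s_j$ with rows in $xs_i$ and columns in $xs_k$ has constant row and column sums. Multiplying $J'=E_s^*JE_s^*$ on the right (resp.\ left) by any spanning product as above then collapses factor by factor to a scalar multiple of $E_s^*JE_s^*$, which gives the two-sided ideal property for all of $E_s^*KT(x)E_s^*$; this rectangular use of Lemma \ref{ss2} is what the paper's one-line proof intends. (Also, the non-semisimplicity application you mention is Theorem \ref{ss4}, not Theorem \ref{ssthm}.)
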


\begin{proof}
  The statement holds by Lemma \ref{ss2}.
\end{proof}

\begin{thm}\label{ss4}
  Let $K$ be a field of positive characteristic $p$.
  Suppose that $p$ divides the valency  $n_s$ for some $s\in S$.
  Then $KT(x)$ is not semisimple.
\end{thm}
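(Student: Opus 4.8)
The strategy is to exhibit a nonzero nilpotent two-sided ideal of $KT(x)$, which forces $KT(x)$ to be non-semisimple, and by Proposition \ref{ss1} it suffices to produce a nonzero nilpotent two-sided ideal inside the corner algebra $E_s^*(KT(x))E_s^*$ for the bad index $s$ — that proposition reduces semisimplicity of the whole algebra to semisimplicity of each corner. So I would fix $s$ with $p \mid n_s$ and work entirely inside $A := E_s^*KT(x)E_s^*$, which is an algebra with identity $E_s^*$.

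The key computation concerns the element $E_s^* J E_s^*$, the all-ones matrix supported on the block $xs \times xs$; by Lemma \ref{ss3} the line $KE_s^*JE_s^*$ is a one-dimensional two-sided ideal of $A$. The plan is to show this ideal is nilpotent when $p \mid n_s$. Indeed $(E_s^*JE_s^*)^2 = |xs|\, E_s^*JE_s^* = n_s\, E_s^*JE_s^*$, and since $p \mid n_s$ the scalar $n_s$ vanishes in $K$, so $(E_s^*JE_s^*)^2 = 0$. Thus $KE_s^*JE_s^*$ is a nonzero two-sided ideal with square zero, hence contained in the Jacobson radical of $A$, so $A$ is not semisimple. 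The one remaining point is that $E_s^*JE_s^* \ne 0$, i.e. that $xs$ is nonempty: this holds because $n_s \geq 1$ for every $s \in S$ (and in fact we only care about $s$ with $p \mid n_s$, so certainly $n_s \geq p \geq 2$), so the block is genuinely nonzero.

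Having shown $A = E_s^*(KT(x))E_s^*$ is non-semisimple, the contrapositive of Proposition \ref{ss1} immediately gives that $KT(x)$ itself is non-semisimple, completing the argument.

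I do not expect any genuine obstacle here: the only mildly delicate point is making sure the ideal $KE_s^*JE_s^*$ is honestly nonzero and honestly inside $A$, both of which are handled by Lemma \ref{ss3} together with $n_s \geq 1$. The conceptual content is entirely front-loaded into Lemma \ref{ss3} and Proposition \ref{ss1}; the theorem is then a one-line consequence once one observes that multiplying the all-ones block by itself reproduces it scaled by the valency, which dies mod $p$.
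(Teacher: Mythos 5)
Your proof is correct and follows the paper's own argument exactly: you use Lemma \ref{ss3} to get the one-dimensional ideal $KE_s^*JE_s^*$ of the corner algebra, note $(E_s^*JE_s^*)^2=n_sE_s^*JE_s^*=0$ when $p\mid n_s$ so the ideal is nilpotent, and then invoke Proposition \ref{ss1} to conclude $KT(x)$ is not semisimple. No issues.
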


\begin{proof}
  Suppose $p$ divides $|xs|$ for $s\in S$.
  Then the ideal $KE_s^* J E_s^*$ of $E_s^* KT(x) E_s^*$ is nilpotent.
  Thus $E_s^* KT(x) E_s^*$ is not semisimple, and so is $KT(x)$ by Proposition \ref{ss1}.
\end{proof}

\section{Johnson graphs $J(n,2)$}\label{sec:Johnson}
The structures of Terwilliger algebras of Johnson schemes $J(n,k)$ were determined in
\cite{Levstein-Maldonado2007, Lv-Maldonado-Wang2014}.
We focus only on the case $k=2$ and consider their modular Terwilliger algebras.
The structure is independent of the choice of a point $x$,
because the automorphism group acts on points transitively.
We will write $T$ instead of $T(x)$ in this section.

Let $(X,S)$ be the Johnson scheme $J(n,2)$ ($n\geq 5$).
This gives an $(n(n-1)/2, 2(n-2), n-2, 4)$-strongly regular graph.
By definition, we set $X:=\{\{i,j\}\mid 1\leq i < j \leq n\}$,
$S:=\{s_0,s_1,s_2\}$, $s_0:=\{(y,y)\mid y\in X\}$,
$s_1:=\{(y,z)\mid |y\cap z|=1\}$, and $s_2:=\{(y,z)\mid y\cap z=\emptyset\}$.
We often write $1$ instead of $s_1$, and so on.
For example, $\s_1$ is $\s_{s_1}$.
Fix $x:=\{1,2\}$.
We have
\begin{eqnarray*}
  xs_0 &=& \{\{1,2\}\},\\
  xs_1 &=& \{\{1,i\}\mid 2\leq i\leq n\}\cup \{(2,i)\mid 2\leq i\leq n\},\\
  xs_2 &=& \{\{i,j\}\mid 3\leq i<j \leq n\}.
\end{eqnarray*}
Valencies are $n_0=1$, $n_1=2(n-2)$ and $n_2 =(n-2)(n-3)/2$.
We fix $\{1,3\}, \{1,4\}, \dots, \{1,n\}, \{2,3\}, \{2,4\},\dots,\{2,n\}$ the order of $xs_1$.
Then we have
$$\s_1=\left(\begin{array}{c|c|c}
               0 & \bs{j}_{2(n-2)} & \bs{0}_{n_2}\\
               \hline
               \bs{j}_{2(n-2)}^T & \begin{array}{cc}
            J_{n-2}-I_{n-2} & I_{n-2}\\
            I_{n-2} & J_{n-2}-I_{n-2}
                                   \end{array} & D \\
               \hline
               \bs{0}_{n_2}^T & D^T & C
             \end{array}\right),$$
where
$I_{n-2}$ is the identity matrix of degree $n-2$,
$J_{n-2}$ is the square matrix of degree $(n-2)$ all whose entries are one,
$\bs{j}_{2(n-2)}$ is the row vector of degree $2(n-2)$  all whose entries are one,
$\bs{0}_{n_2}$ is the zero row vector of degree $n_2$, 
$C$ is the adjacency matrix of $J(n-2,2)$,
and $D$ is a incidence matrix of a tactical configuration,
every row of $D$ contains $n-3$ ones and
every column of $D$ contains $4$ ones.
We set $\Z T':=\sum_{i, j, k}\Z E_i^*\s_jE_k^*$.
This is not closed by multiplication.
It is not so hard to check that
all products $(E_i^*\s_jE_k^*)(E_{i'}^*\s_{'}jE_{k'}^*)$ are in $\Z T'$
except for 
\begin{eqnarray*}
  (E_1^*\s_1E_1^*)(E_1^*\s_1E_1^*)
  &=&(n-2)E_1^*\s_0 E_1^*+(n-4)E_1^*\s_1 E_1^*\\
  && + 2E_1^*\s_2 E_1^*-(n-4)M,\\
  (E_1^*\s_1E_1^*)(E_1^*\s_2E_1^*)
  &=& E_1^*\s_1E_1^* +(n-4)E_1^*\s_2E_1^*+(n-4)M,\\
  (E_1^*\s_2E_1^*)(E_1^*\s_1E_1^*)
  &=& E_1^*\s_1E_1^* +(n-4)E_1^*\s_2E_1^*+(n-4)M,\\
  (E_1^*\s_2E_1^*)(E_1^*\s_2E_1^*)
  &=& (n-3)E_1^*s_0E_1^*+(n-4)E_1^*\s_1E_1^*-(n-4)M,
\end{eqnarray*} 
where
$$M=\left(\begin{array}{c|c|c}
               0 & \bs{0}_{2(n-2)} & \bs{0}_{n_2}\\
               \hline
               \bs{0}_{2(n-2)}^T & \begin{array}{cc}
            O_{n-2} & I_{n-2}\\
            I_{n-2} & O_{n-2}
                                   \end{array} & O \\
               \hline
               \bs{0}_{n_2}^T & O & O
             \end{array}\right),$$
$O$ are zero matrices.
Now we can see that
$$\Z T=\Z\langle E_i^*\s_jE_k^* \mid 0\leq i, j, k\leq 2 \rangle
=\sum_{i, j, k}\Z E_i^*\s_jE_k^*+\Z(n-4)M.$$
On the other hand,
$$\Q T\cap \M_X(\Z)=\sum_{i, j, k}\Z E_i^*\s_jE_k^*+\Z M
\supsetneq \Z T.$$
This gives a negative answer to Terwilliger's small question ``(is generated by?)'' in
\cite[Conjecture 10]{Terwilliger1993b}.
We can get infinitely many such examples in this way.

\begin{thm}\label{ssthm}
  For the Terwilliger algebra of the Johnson scheme $J(n,2)$ ($n\geq 5$),
  the following statements hold.
  \begin{enumerate}[(1)]
    \item The structure of the Terwilliger algebra does not depend on the choice of the point.
    \item $\Z T=\sum_{i, j, k}\Z E_i^*\s_jE_k^*+\Z(n-4)M$ and
    $\Q T\cap \M_X(\Z)=\sum_{i, j, k}\Z E_i^*\s_jE_k^*+\Z M$.
    \item For a field $K$, $\dim_K K T=16$ if $\chara K=0$ or $\chara K\nmid n-4$,
    and $\dim_K K T=15$ if $\chara K\mid n-4$.
  \end{enumerate}
\end{thm}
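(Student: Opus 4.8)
We outline an approach to the three parts, relying on the matrix identities recorded above.

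For part (1), the plan is to use the automorphism group: the symmetric group $S_n$ acts faithfully on $X$ as automorphisms of $J(n,2)$, and this action is transitive (indeed $2$-transitive on $\{1,\dots,n\}$). Given $x,x'\in X$, pick $g\in S_n$ with $gx=x'$ and let $P\in\M_X(\Z)$ be its permutation matrix. Since $g$ preserves each relation $s\in S$ we have $P\s_sP^{-1}=\s_s$, and since $g(xs)=x's$ the matrix $P$ conjugates the idempotent $E_s^*$ attached to $x$ to the corresponding idempotent attached to $x'$. Hence conjugation by $P$ carries the generating set $\{E_s^*\s_tE_u^*\}$ of $KT(x)$ onto that of $KT(x')$, giving a $K$-algebra isomorphism $KT(x)\cong KT(x')$.

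For part (2), set $N:=\sum_{i,j,k}\Z E_i^*\s_jE_k^*+\Z(n-4)M$ and $L:=\sum_{i,j,k}\Z E_i^*\s_jE_k^*+\Z M$. First I would check that $N$ is a ring: by the list of products above, a product of two generators $E_i^*\s_jE_k^*$ lies in $\sum_{i,j,k}\Z E_i^*\s_jE_k^*$ except for the four displayed identities, and the right-hand side of each of those lies in $N$; so $N$ is a subring of $\M_X(\Z)$ containing all generators, whence $\Z T\subseteq N$. Conversely, solving one of those four identities for $(n-4)M$ exhibits $(n-4)M$ as an integral combination of generators and of a product of generators, so $(n-4)M\in\Z T$ and therefore $N\subseteq\Z T$; thus $\Z T=N$. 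Since $n\geq5$, $M=(n-4)^{-1}\cdot(n-4)M$ lies in $\Q T=\Q\otimes_\Z\Z T$, so $\Q T=\sum_{i,j,k}\Q E_i^*\s_jE_k^*+\Q M$ and $L\subseteq\Q T\cap\M_X(\Z)$. For the opposite inclusion it is enough to show that $L$ is saturated in $\M_X(\Z)$; here I would exploit that, with respect to the partition $X=xs_0\cup xs_1\cup xs_2$, each $E_i^*\s_jE_k^*$ and $M$ is supported in a single $xs_i\times xs_k$ block and is assembled from sub-blocks of the shapes $I$, $J$ and $J-I$. For $n\geq5$ these sub-blocks occupy distinguishable positions with distinguishable entries, so the coefficients in any integral combination are forced to be integers; hence $\Q T\cap\M_X(\Z)=L$.

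For part (3), I would combine part (2) with the elementary-divisor formula of Section~2. Part (2) gives $\Z T\subseteq\Q T\cap\M_X(\Z)$ with quotient $L/N\cong\Z M/\Z(n-4)M\cong\Z/(n-4)\Z$, the first isomorphism holding because $M$ is not in the $\Q$-span of the $E_i^*\s_jE_k^*$ — a fact that also comes out of the entrywise analysis of part (2) and shows $\mathrm{rank}_\Z\Z T=16=\dim_\C\C T$. Consequently the elementary divisors of $\Z T$ in $\Q T\cap\M_X(\Z)$ are $1,\dots,1,n-4$, and the formula $\dim_{\F_p}\F_pT=\dim_\C\C T-|\{i:p\mid e_i\}|$ from Section~2 (together with the fact, noted there, that the dimension depends only on $\chara K$) yields $\dim_KKT=16$ when $\chara K=0$ or $\chara K\nmid n-4$, and $\dim_KKT=15$ when $\chara K\mid n-4$. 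One can also see this directly: if $n-4$ is invertible in $K$ then $M\in KT$ and $KT=\sum_{i,j,k}KE_i^*\s_jE_k^*+KM$ is $16$-dimensional, whereas if $\chara K\mid n-4$ then the terms $(n-4)M$ vanish in every product, so $KT=\sum_{i,j,k}KE_i^*\s_jE_k^*$ is $15$-dimensional.

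The step I expect to be the real obstacle is the entrywise bookkeeping invoked above: showing that the nonzero matrices among the $E_i^*\s_jE_k^*$ are linearly independent over every field and span a saturated sublattice of rank $15$, and that adjoining $M$ keeps the lattice saturated of rank $16$. This is a finite computation, but it is exactly where the hypothesis $n\geq5$ genuinely enters, and it must be carried out alongside the verification of the four exceptional product identities, which follow routinely but laboriously from the explicit form of $\s_1$.
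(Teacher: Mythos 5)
Your proposal follows essentially the same route as the paper: part (1) via transitivity of the automorphism group of $J(n,2)$ on points, part (2) via the four exceptional product identities together with an entrywise/saturation check identifying the two lattices $\sum_{i,j,k}\Z E_i^*\s_jE_k^*+\Z(n-4)M$ and $\sum_{i,j,k}\Z E_i^*\s_jE_k^*+\Z M$, and part (3) by counting the nonzero $E_i^*\s_jE_k^*$ and invoking (2) (equivalently, the elementary-divisor formula with divisors $1,\dots,1,n-4$). The bookkeeping you flag as the main obstacle is exactly the step the paper itself leaves as a routine verification, so there is no substantive difference in approach.
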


\begin{proof}
  Statements (1) and (2) are already proved.
  The statement (3) holds by counting non-zero $E_i^*\s_jE_k^*$ and (2).
\end{proof}

\section{Examples}\label{sec:ex}
\subsection{The non-symmetric association scheme of order $15$ and rank $3$}\label{ex15.5}
There is a unique non-symmetric association scheme of order $15$ and rank $3$,
that is No.~5 in \cite{Hanaki-HP}.
Set $X=\{1,\dots,15\}$.
The automorphism group acts on $X$ intransitively and the orbits are
$\{1,3,5,8,12,13,15\}$, $\{2,4,6,7,9,10,14\}$, $\{11\}$.
Let $K$ be a field of characteristic $2$.
Then
$$\dim_\Q \Q T(1)=33,\quad \dim_K K T(1)=31,$$
$$\dim_\Q \Q T(2)=\dim_K K T(2)=17,$$
$$\dim_\Q \Q T(11)=17,\quad \dim_K K T(11)=15.$$
For all cases, $K T(x)$ ($x=1,2,11$) are not semisimple,
$$\dim_K J(KT(1))=10,\quad  \dim_K J(KT(2))=4,\quad  \dim_K J(KT(11))=2,$$
where $J(KT(x))$ is the Jacobson radical of $KT(x)$.
However, all $E_i^* KT(x) E_i^*$ ($i=0,1,2$, $x=2,11$) are semisimple.
This shows that the converse of Proposition \ref{ss1} is not true.

\subsection{Chang graphs}
There are four $(28,12,6,4)$-strongly regular graphs.
One is $J(8,2)$ and others are Chang graphs \cite[3.11 (vii)]{BCN}.
In the following table, we will only give dimensions of their Terwilliger algebras $KT(x)$.
\begin{center}
  \begin{tabular}{|c||c|c|c|c|}
    \hline
    $\chara K$ & $J(8,2)$ &Chang1  & Chang2  &Chang3  \\
    \hline
    \hline
    $0,3,5,7$ & $16$ & $20$, $27$&$23$, $27$ &$23$, $35$\\
    \hline
    $2$ & $15$ & $19$ & $23$ & $23$\\
    \hline
  \end{tabular}
\end{center}
A remarkable fact is that the dimensions are independent of the choice of the points
in characteristic $2$.
For irreducible $\C T(x)$-modules, see \cite{Tomiyama-Yamazaki1994}.

\subsection{$(16,6,2,2)$-strongly regular graphs}
There are two $(16,6,2,2)$-strongly regular graphs \cite[3.11 (vi)]{BCN}.
For them, the automorphism groups act transitively on points
and thus the Terwilliger algebras are independent of the choice of the fixed points.
One of them has
$$\dim_\Q \Q T(x)= \dim_K KT(x)=15$$
and the other has
$$\dim_\Q \Q T(x)=20,\quad  \dim_K K T(x)=19,$$
where $K$ is the field of characteristic $2$.

\subsection{$(26,10,3,4)$-strongly regular graphs}\label{subsec26}
There are ten $(26,10,3,4)$-strongly regular graphs.
In the following table, we will give dimensions of their Terwilliger algebras $KT(x)$.
We use the numbering of them in \cite{Hanaki-HP}.
We will write ``$\dots$'' if the dimensions are same with in characteristic $0$.
\begin{center}
  \begin{tabular}{|c||c|c|c|c|}
    \hline
    $\chara K$ & No.~3 & No.~4 & No.~5 \\
    \hline
    \hline
    $0, 13$ & $19, 24, 28, 31, 39, 47$ & $19, 24, 29, 31, 39, 47$ & $24, 31, 35, 39, 47$ \\
    \hline
    $2$ & $19, 22, 23, 28, 29, 30$ & $19, 23, 26, 29, 30$ & $22, 27, 29, 30$ \\
    \hline
    $3$ & $\dots$ & $19, 24, 28, 31, 39, 47$ & $\dots$ \\
    \hline
    $5$ & $19, 24, 27, 31, 39, 47$ & $\dots$ & $24, 31, 35, 38, 47$ \\
    \hline
    $7$ & $\dots$ & $\dots$ & $24, 31, 34, 39, 47$ \\
    \hline
    $11$ & $19, 24, 27, 31, 39, 47$ & $\dots$ & $24, 30, 31, 35, 39, 47$ \\
    \hline
  \end{tabular}

  \begin{tabular}{|c||c|c|c|c|c|c|c|c|}
    \hline
    $\chara K$ & No.~6 & No.~7 & No.~8 \\
    \hline
    \hline
    $0, 13$ & $19, 24, 28, 29, 35, 47$ & $19, 24, 28, 29, 35, 47$ & $19, 28, 29, 35, 47$ \\
    \hline
    $2$ & $19, 23, 26, 27, 28, 30$ & $19, 23, 25, 26, 27, 28, 30$ & $19, 26, 28, 30$ \\
    \hline
    $3$ & $19, 24, 28, 35, 47$ & $19, 24, 28, 35, 47$ & $19, 28, 35, 47$ \\
    \hline
    $5$ & $\dots$ & $\dots$ & $\dots$ \\
    \hline
    $7$ & $\dots$ & $\dots$ & $\dots$ \\
    \hline
    $11$ & $19, 24, 27, 29, 35, 47$ & $19, 24, 27, 29, 35, 47$ & $19, 27, 29, 35, 47$ \\
    \hline
  \end{tabular}

  \begin{tabular}{|c||c|c|c|c|c|}
    \hline
    $\chara K$ & No.~9 & No.~10 & No.~11 & No.~12 \\
    \hline
    \hline
    $0, 13$ & $31, 35$ & $24, 28$ & $19, 28, 29, 35, 47$ & $28, 29, 35, 47$ \\
    \hline
    $2$ & $29, 30$ & $23, 28$ & $19, 25, 26, 27, 28, 30$ & $26, 27, 28, 30$ \\
    \hline
    $3$ & $\dots$ & $\dots$ & $19, 28, 35, 47$ & $28, 35, 47$ \\
    \hline
    $5$ & $\dots$ & $23, 27$ & $\dots$ & $\dots$ \\
    \hline
    $7$ & $31, 34$ & $\dots$& $\dots$ & $\dots$ \\
    \hline
    $11$ &  $\dots$ & $24, 27$ & $19, 27, 29, 35, 47$ & $27, 29, 35, 47$ \\
    \hline
  \end{tabular}
\end{center}

\bibliographystyle{amsplain}

\providecommand{\bysame}{\leavevmode\hbox to3em{\hrulefill}\thinspace}
\providecommand{\MR}{\relax\ifhmode\unskip\space\fi MR }
\providecommand{\MRhref}[2]{%
  \href{http://www.ams.org/mathscinet-getitem?mr=#1}{#2}
}
\providecommand{\href}[2]{#2}

\end{document}